\DeclareMathOperator{\mob}{Mob_{gp}}
\DeclareMathOperator{\gp}{gp}
\newcommand{\address}[1]{#1}
\newtheorem{theorem}{Theorem}[section]
\newtheorem{lemma}[theorem]{Lemma}
\theoremstyle{definition}
\newcommand{\move}{\rightsquigarrow}
\tikzset{middlearrow/.style={
		decoration={markings,
			mark= at position 0.75 with {\arrow[scale=2]{#1}} ,
		},
		postaction={decorate}
	}
}
\tikzset{midarrow/.style={
		decoration={markings,
			mark= at position 0.75 with {\arrow[scale=2]{#1}} ,
		},
		postaction={decorate}
	}
}
\begin{document}
	
	\title{Traversing a graph in general position}
	\author{Sandi Klav\v{z}ar $^{a,b,c}$ \\ \texttt{\footnotesize sandi.klavzar@fmf.uni-lj.si}  \\
		{\footnotesize ORCID: 0000-0002-1556-4744}
		\and
		Aditi Krishnakumar $^{d}$ \\ \texttt{\footnotesize aditikrishnakumar@gmail.com} \\
		{\footnotesize ORCID: 0000-0002-0954-7542}
		\and 
		James Tuite $^{d}$ \\ \texttt{\footnotesize james.t.tuite@open.ac.uk} \\ 
		{\footnotesize ORCID: 
			0000-0003-2604-7491}
		\and 
		Ismael G. Yero $^{e}$ \\ \texttt{\footnotesize ismael.gonzalez@uca.es} \\
		{\footnotesize ORCID: 0000-0002-1619-1572}
	}
	
	\maketitle
	
	\address{
		$^a$ Faculty of Mathematics and Physics, University of Ljubljana, Slovenia
		
		$^b$ Institute of Mathematics, Physics and Mechanics, Ljubljana, Slovenia
		
		$^c$ Faculty of Natural Sciences and Mathematics, University of Maribor, Slovenia
		
		$^d$ Department of Mathematics and Statistics, Open University, Milton Keynes, UK
		
		$^e$ Departamento de Matem\'aticas, Universidad de C\'adiz, Algeciras, Spain 
	}
	
	\begin{abstract}
		Let $G$ be a graph. Assume that to each vertex of a set of vertices $S\subseteq V(G)$ a robot is assigned. At each stage one robot can move to a neighbouring vertex. Then $S$ is a mobile general position set of $G$ if there exists a sequence of moves of the robots such that all the vertices of $G$ are visited whilst maintaining the general position property at all times. The mobile general position number of $G$ is the cardinality of a largest mobile general position set of $G$. In this paper, bounds on the mobile general position number are given and exact values determined for certain common classes of graphs including block graphs, rooted products, unicyclic graphs, Kneser graphs $K(n,2)$, and line graphs of complete graphs. 
	\end{abstract}
	
	\noindent
	{\bf Keywords:} general position set; mobile general position set; mobile general position number; robot navigation; unicyclic graph; Kneser graph
	
	\noindent
	AMS Subj.\ Class.\ (2020): 05C12, 05C76
	
	\section{Introduction}
	
	In this article $G=(V(G),E(G))$ will represent a connected simple graph whose {\em order} is $n(G)=|V(G)|$. We will indicate that vertices $u$ and $v$ are adjacent by writing $u \sim v$. A \emph{$u,v$-path} of length $\ell $ is a sequence $u=u_0,u_1,\dots ,u_{\ell -1},u_{\ell }=v$ of distinct vertices of $G$ such that $u_i \sim u_{i+1}$ for $0 \leq i < \ell$.  The {\em distance} $d_G(u,v)$ between two vertices $u,v\in V(G)$ is the length of a shortest $u,v$-path. A {\em clique} of $G$ is a set $S\subseteq V(G)$ of mutually adjacent vertices, i.e. $S$ induces a complete graph. The {\em clique number}, denoted $\omega(G)$, is the cardinality of a largest clique in $G$. For a given set $S\subset V(G)$, the subgraph induced by $S$ will be written $G[S]$. For a positive integer $k$ we will use the notation $[k]=\{1,\dots,k\}$.
	
	General position sets in graphs have been widely studied in recent years, for example in~\cite{AnaChaChaKlaTho, ghorbani-2021, klavzar-2021, patkos-2020, tian-2021a, tian-2021b, yao-2022}. The concept was independently introduced in~\cite{manuel-2018} and~\cite{ullas-2016}; the terminology and set-up of the present work follow the former paper. However, the general position sets of hypercubes and integer lattices were investigated earlier in a different context in~\cite{korner-1995, palvolgyi-2014}, respectively. 
	
	One of the original motivations of the general position problem in~\cite{manuel-2018} was to place a set of robots in a graph such that any pair of robots situated at vertices $u,v$ can exchange signals by any shortest $u,v$-path without being obstructed by another robot. This static concept can be transformed into a dynamic one which is more closely related to practical problems in robotic navigation and transport. In fact this research was inspired by the delivery robots belonging to Starship Technologies\textregistered\ \cite{starship} that deliver groceries to the inhabitants of cities including Milton Keynes, home of the Open University. For some related studies on robot mobility in computer science see~\cite{aljohani-2018a,aljohani-2018b, diluna-2017}. In this paper we introduce a variant that describes the largest number of robots that can travel through a network such that each vertex of the network can be visited by a robot, whilst at every stage any pair of robots can see each other through any shortest path between their positions without being obstructed by another robot. We now describe this concept in greater detail.
	
	A set of vertices $S$ of a graph $G$ forms a {\em general position set} if no three distinct vertices from $S$ lie on a common shortest path. The {\em general position number} $\gp (G)$ of $G$ is the cardinality of a largest general position set. Assume that to each vertex of a general position set $S\subseteq V(G)$ one robot is assigned. The robots can move through the graph one at a time. We say that a move of a robot is {\em legal} if the robot moves to an adjacent unoccupied vertex such that the new set of occupied vertices is also in general position. If there exists a sequence of legal moves such that every vertex of $G$ can be visited by at least one robot, then we say that $S$ is a {\em mobile general position set}. The \emph{mobile general position number} $\mob(G)$ of $G$ is the cardinality of a largest mobile general position set of $G$. Such a set will be briefly called a {\em mobile gp-set} of $G$. We will also abbreviate the term mobile general position set to {\em mobile set} and mobile general position number to {\em mobile number}. A move of a robot from vertex $u$ to a neighbor $v$ will be denoted by $u \move v$.
	
	The paper is organised as follows. In the rest of the introduction we give some preliminary results and examples. In Section~\ref{sec:cut} we consider graphs with cut vertices, in particular block graphs, rooted products, and unicyclic graphs. In Section~\ref{sec:kneser} the mobile number is determined for Kneser graphs $K(n,2)$ and line graphs of complete graphs. We conclude the paper with some open problems. 
	
	\subsection{Preliminaries}
	\label{sec:prelim}
	
	We begin with the following bounds. 
	
	\begin{lemma}\label{lem:trivial bounds}
		If $G$ is a graph with $n(G)\ge 2$, then $2\le \mob(G) \leq \gp(G)$. Moreover, for any $2 \leq a \leq b$ there exists a graph with $\mob (G) = a$ and $\gp (G) = b$.
	\end{lemma}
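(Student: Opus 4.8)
The inequality $\mob(G)\le \gp(G)$ is immediate from the definitions: every mobile general position set is by definition a general position set, so its cardinality is at most $\gp(G)$. For the lower bound $\mob(G)\ge 2$ I would exploit the fact that any set of at most two vertices is trivially in general position, so that with two robots the general position requirement is vacuous and only reachability matters. Concretely, fix a spanning tree $T$ of $G$, let $u$ be a leaf of $T$ and $v$ its neighbour. Since $T-u$ is a spanning tree of $G-u$, the graph $G-u$ is connected, so a single robot placed at $v$ can walk along a spanning tree of $G-u$ (traversing each edge twice) and thereby visit every vertex of $G-u$ without ever stepping on $u$. Parking a second robot at $u$ covers the remaining vertex, and throughout this process only two vertices are ever occupied, so every configuration is automatically in general position. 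Hence $S=\{u,v\}$ is a mobile set and $\mob(G)\ge 2$.

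For the realization statement I would exhibit an explicit family. Given $2\le a\le b$, let $G_{a,b}$ be the graph obtained from a complete graph on $\{v_1,\dots,v_a\}$ by attaching $t=b-a+1$ leaves $u_1,\dots,u_t$ to the vertex $v_1$. The first step is a short distance computation showing that the only \emph{forbidden triples} (three vertices one of which lies on a shortest path between the other two) are $\{v_1,v_j,u_k\}$ with $j\ne 1$ and $\{v_1,u_k,u_\ell\}$; in both cases $v_1$ is the middle vertex. Equivalently, as long as $v_1$ is unoccupied every subset of $\{v_2,\dots,v_a,u_1,\dots,u_t\}$ is in general position, whereas if $v_1$ is occupied then the occupied set must either lie entirely inside the clique $\{v_1,\dots,v_a\}$ or consist of $v_1$ together with a single leaf.

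From this it follows quickly that $\gp(G_{a,b})=b$: the set $\{v_2,\dots,v_a\}\cup\{u_1,\dots,u_t\}$ has size $(a-1)+t=b$ and is in general position, while the forbidden-triple analysis shows that no general position set containing $v_1$ exceeds size $a\le b$. For the lower bound $\mob(G_{a,b})\ge a$ I would start $a$ robots on the clique and let the robot at $v_1$ shuttle out to each leaf and back in turn ($v_1\move u_k\move v_1$), visiting every leaf while the other $a-1$ robots remain parked on $v_2,\dots,v_a$; every intermediate configuration avoids the forbidden triples, so this is a legal traversal.

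The crux is the upper bound $\mob(G_{a,b})\le a$, and this is where I expect the real work to lie. The key observation is that $v_1$ must itself be visited, so at some moment $v_1$ is occupied. If a mobile set has at least three robots then the single-leaf alternative is impossible, forcing all robots into the clique $\{v_1,\dots,v_a\}$ at that moment; as this has only $a$ vertices, the set has size at most $a$. Thus every mobile set of size at least three has size at most $a$, which together with the always-available mobile set of size two and the lower bound $\mob(G_{a,b})\ge a$ yields $\mob(G_{a,b})=a$ in every case. The only subtlety to check carefully is the boundary behaviour when $a=2$ or when $b=a$ (so $t=1$), which I would verify directly against the case analysis above.
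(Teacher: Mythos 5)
Your proposal is correct. The two bounds $2\le \mob(G)\le \gp(G)$ are handled essentially as in the paper: the upper bound is definitional, and for the lower bound both arguments rest on the fact that two robots are vacuously in general position so only reachability matters (the paper sends whichever robot is nearer to a target along a shortest path, while you park one robot at a leaf $u$ of a spanning tree and walk the other around $G-u$; both are fine). The realization statement is where you genuinely diverge. The paper proves the formula $\mob(K_{r_1,\dots,r_t})=\max\{2,t-1\}$ for complete multipartite graphs (with at least three robots each part can hold at most one, and no robot can move once every part is occupied) and then picks a suitable multipartite graph. You instead attach $b-a+1$ pendant leaves to a vertex $v_1$ of $K_a$, observe that every forbidden triple consists of $v_1$ together with a leaf and a third vertex, deduce that any general position set of size at least $3$ containing $v_1$ lies inside the clique, and exploit the fact that $v_1$ must eventually be occupied to cap every mobile set of size at least $3$ at $a$; the shuttling scheme and the set $\{v_2,\dots,v_a,u_1,\dots,u_t\}$ give the matching lower bounds $\mob\ge a$ and $\gp= b$. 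All of this checks out, including the boundary cases you flag: for $a=b$ your graph degenerates to $K_a$ with one pendant leaf, and for $a=2$ to a star. Your construction even has a small advantage: the paper's family as literally stated (``a complete $a$-partite graph with largest part of order $b$'') has $t=a$ parts and hence $\mob=\max\{2,a-1\}$, so it needs $a+1$ parts and then cannot realize $b=a$; your pendant-leaf family covers all $2\le a\le b$ uniformly, at the modest cost of the explicit forbidden-triple analysis.
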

	
	\begin{proof}
		As the set of occupied vertices at any stage must be a general position set of $G$, we have $\mob(G) \leq \gp(G)$. Any set of two vertices is in general position. Let $S = \{ u,v\} \subset V(G)$ be any set of two vertices of $G$; then for any vertex $w$ of $G$ the robot closest to $w$ (say $d_G(u,w) \leq d_G(v,w)$) can move along a shortest path to $w$ without crossing the robot at vertex $v$, so that $\mob(G) \geq 2$.
		
		To prove the second assertion, we claim that if $r_1 \geq \cdots \geq r_t$, where $t \geq 2$ and $r_1 \geq 2$, then 
		\begin{equation}
			\label{eq:complete-multi}
			\mob(K_{r_1,\dots ,r_t}) = \max \{2,t-1\}\,.
		\end{equation}
		Let $S$ be the set of occupied vertices in the initial configuration of $\mob(K_{r_1,\dots ,r_t}) $ robots. It is trivial that two robots can visit every vertex and remain in general position, so assume that $|S| \geq 3$. Assume that $S$ contains at least $2$ vertices from the same partite set $W$. Hence, there can be no robots in the other partite sets and, furthermore, if there are at least $3$ robots, then no robot can move from $W$ to a different partite set. Therefore, if $\mob(K_{r_1,\dots ,r_t}) \geq 3$, then the set of occupied vertices at any stage contains at most one vertex from each partite set. If each partite set contains an occupied vertex, then no robot can move without making a partite set containing at least $2$ robots. So it follows that $|S| \leq t-1$. On the other hand, clearly $t-1$ robots can remain in general position and visit every vertex of $K_{r_1,\dots ,r_t}$.  
		
		The second assertion now follows from~\eqref{eq:complete-multi} upon taking a complete $a$-partite graph with largest part of order $b$.
	\end{proof}
	
	We close the introduction with some examples. First, $\mob(C_4) = \mob(C_6) = 2$ and if $n\ge 3$ and $n\ne 4,6$, then $\mob(C_n) = 3$. Since $\gp(C_4) = 2$, Lemma~\ref{lem:trivial bounds} yields $\mob(C_4) = 2$. In $C_6$, a general position set of order $3$ is an independent set. But such a set is not a mobile set, hence $\mob(C_6) = 2$. Assume in the rest that $n\ge 3$ and $n\ne 4,6$. Set $V(C_n) = \mathbb{Z}_n$. When $n=2r+1$, consider the general position set $\{0,r,r+1\}$. The sequence of moves $r+1 \move r+2$, $0 \move 1$ and $r \move r+1$ keeps the property of being in general position. By iterating this procedure, each vertex of $C_n$ will be visited by a robot. As $\gp(C_n) = 3$, Lemma~\ref{lem:trivial bounds} implies that this set is a mobile gp-set. The second case to consider is when $n$ is even. Then consider a set of three vertices that are as equidistant as possible. By moving robots sequentially along the cycle in the same direction, all vertices will be visited by the robots. 
	
	Consider next the Petersen graph $P$. A scheme that allows four robots to visit every vertex of $P$ in general position is shown in Fig.~\ref{fig:Petersen4robots}; robots are initially positioned at the vertices labelled 1,2, 3 and 4. The robot at position 1 can visit vertices $a$, $b$ and $e$, the robot at $3$ can visit vertex $d$ and the robot at 2 can visit the vertices $c$ and $f$. 
	
	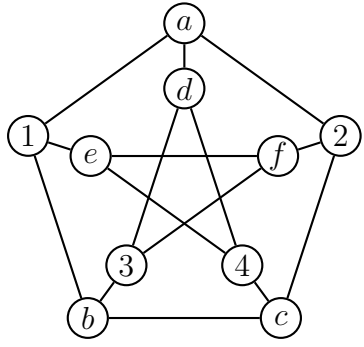
\begin{figure}[ht!]
		\centering
		\begin{tikzpicture}[x=0.2mm,y=-0.2mm,inner sep=0.2mm,scale=0.6,thick,vertex/.style={circle,draw,minimum size=15}]
			\node at (180,200) [vertex] (v1) {$a$};
			\node at (8.8,324.4) [vertex] (v2) {1};
			\node at (74.2,525.6) [vertex] (v3) {$b$};
			\node at (285.8,525.6) [vertex] (v4) {$c$};
			\node at (351.2,324.4) [vertex] (v5) {2};
			\node at (180,272) [vertex] (v6) {$d$};
			\node at (116.5,467.4) [vertex] (v7) {3};
			\node at (282.7,346.6) [vertex] (v8) {$f$};
			\node at (77.3,346.6) [vertex] (v9) {$e$};
			\node at (243.5,467.4) [vertex] (v10) {4};
			
			\path
			(v1) edge (v2)
			(v1) edge (v5)
			(v2) edge (v3)
			(v3) edge (v4)
			(v4) edge (v5)
			
			(v6) edge (v7)
			(v6) edge (v10)
			(v7) edge (v8)
			(v8) edge (v9)
			(v9) edge (v10)
			
			(v1) edge (v6)
			(v2) edge (v9)
			(v3) edge (v7)
			(v4) edge (v10)
			(v5) edge (v8)

			;
		\end{tikzpicture}
		\caption{Four robots traversing the Petersen graph in general position}
		\label{fig:Petersen4robots}
	\end{figure}
	To see that four robots is optimal, suppose for a contradiction that $K \subseteq V(G)$ is an initial configuration of at least five robots in general position. Observe that as $\alpha (P) = 4$ and $P$ is edge-transitive, we can assume that $K$ contains the two black vertices in Fig.~\ref{fig:Petersen5robots}. It is easily verified that the remaining robots must be situated on a subset of the gray vertices. If all four gray vertices are occupied, then no robot can move at all without creating three-in-a-line. If there are just five robots, then in the two independent edges joining the gray vertices one edge must have both incident vertices occupied by robots, whilst the other edge has just one robot on it. However in this configuration the only move that can be made is by the robot on the edge containing one robot and this robot can only move to the unoccupied gray vertex and back, so that not all vertices of $P$ can be visited. We have thus shown that $\mob(P) = 4$.

	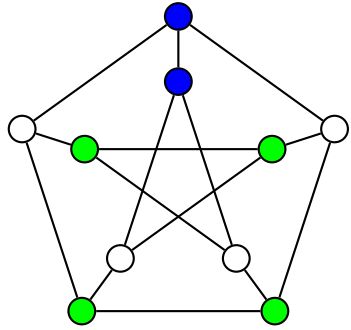
\begin{figure}[ht!]
		\centering
		\begin{tikzpicture}[x=0.2mm,y=-0.2mm,inner sep=0.2mm,scale=0.6,thick,vertex/.style={circle,draw,minimum size=10}]
			\node at (180,200) [vertex,fill=black] (v1) {};
			\node at (8.8,324.4) [vertex] (v2) {};
			\node at (74.2,525.6) [vertex,fill=gray] (v3) {};
			\node at (285.8,525.6) [vertex,fill=gray] (v4) {};
			\node at (351.2,324.4) [vertex] (v5) {};
			\node at (180,272) [vertex,fill=black] (v6) {};
			\node at (116.5,467.4) [vertex] (v7) {};
			\node at (282.7,346.6) [vertex,fill=gray] (v8) {};
			\node at (77.3,346.6) [vertex,fill=gray] (v9) {};
			\node at (243.5,467.4) [vertex] (v10) {};
			
			\path
			(v1) edge (v2)
			(v1) edge (v5)
			(v2) edge (v3)
			(v3) edge (v4)
			(v4) edge (v5)
			
			(v6) edge (v7)
			(v6) edge (v10)
			(v7) edge (v8)
			(v8) edge (v9)
			(v9) edge (v10)
			
			(v1) edge (v6)
			(v2) edge (v9)
			(v3) edge (v7)
			(v4) edge (v10)
			(v5) edge (v8)

			;
		\end{tikzpicture}
		\caption{Five robots cannot visit every vertex of the Petersen graph}
		\label{fig:Petersen5robots}
	\end{figure}
	
	Note that $\mob(G) = n(G)$ if and only if $G$ is complete. Moreover, $\mob(G) = n(G)-1$ if and only if $G$ is obtained from $K_{n-1}$ by attaching a leaf to one of its vertices. The latter result can be deduced from~\cite[Theorem 3.1]{ullas-2016} in which the graphs $G$ with $\gp(G) = n(G)-1$ are characterised; among the two families described there, the above stated graphs are the only ones with the mobile number equal to their general position number. To characterise all graphs with the mobile number equal to their general position number seems to be difficult.     
	
	\section{Graphs with cut-vertices}
	\label{sec:cut}
	
	In this section we first give a technical lemma about mobile sets in graphs with cut-vertices. Then we apply it to block graphs and to rooted products. 
	
	\begin{lemma}
		\label{lem:cut-vertex}
		Let $v$ be a cut-vertex of a (connected) graph $G$ and let $C_1, \ldots, C_k$ be the components of $G-v$. Let $G_i = G[V(C_i)\cup \{v\}]$, $i\in [k]$. If $S$ is a mobile gp-set of $G$, then the following holds. 
		\begin{enumerate}
			\item[{\rm (i)}] If $v\in S$, then $S\subseteq V(G_i)$ for some $i\in [k]$.
			\item[{\rm (ii)}] If $v\notin S$, then $S\subseteq V(C_i)\cup V(C_j)$ for some $i,j\in [k]$. Moreover, $|S\cap V(C_i)|\le 1$ and $|S\cap V(C_j)|\le \mob(G_j)$ (or the other way around). In addition, if $|S\cap V(C_i)| = 1$, then $|S\cap V(C_j)| < \mob(G_j)$. 
		\end{enumerate}
	\end{lemma}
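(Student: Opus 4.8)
The plan is to lean throughout on the single structural fact that $v$ is a cut-vertex: for vertices $x,y$ lying in different components of $G-v$, \emph{every} shortest $x,y$-path passes through $v$, and for $x,y$ in the same $G_i$ the distances in $G_i$ and in $G$ coincide (any walk leaving $G_i$ must use $v$ twice). Part (i) then falls out immediately from the initial configuration alone. If $v\in S$ and $S$ met two components, say $u\in V(C_i)$ and $w\in V(C_j)$ with $i\ne j$, then $u,v,w$ would be three vertices of $S$ on a common shortest $u,w$-path, contradicting that $S$ is a general position set. Hence $S\setminus\{v\}$ lies in one component and $S\subseteq V(G_i)$.

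For the first assertion of (ii) I would track the entire traversal and establish a \emph{trapping invariant}: whenever $v$ is unoccupied and the robots occupy at least three components, no legal move exists that places a robot on $v$ (after such a move robots would remain in two further components, which together with $v$ would violate general position), and no robot can leave its component because the only exit is through $v$. Thus, once three occupied components coexist with $v$ free, this persists for all subsequent configurations and $v$ is never visited, contradicting mobility. Consequently at most two components are occupied whenever $v$ is free, so in particular $S\subseteq V(C_i)\cup V(C_j)$ for some $i,j$.

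Next I would examine the first moment at which some robot performs a move $x\move v$. Until that moment $v$ stays free, so no robot crosses between components and the quantities $|S\cap V(C_i)|$ and $|S\cap V(C_j)|$ are conserved. When this first move onto $v$ occurs, part (i) applied to the resulting configuration forces all remaining robots into a single component; tracing which side the moving robot came from shows that its source side must then be emptied, i.e.\ contained exactly one robot (or all robots already lay in one component). Either way one side, say $C_i$, satisfies $|S\cap V(C_i)|\le 1$, and—this is the useful point—immediately after this move \emph{every} robot lies in $V(G_j)$. Writing $a=|S\cap V(C_i)|\le 1$ and $b=|S\cap V(C_j)|$ and assuming $|S|\ge 3$ (the cases $|S|\le2$ being immediate from $\mob(G_j)\ge 2$ of Lemma~\ref{lem:trivial bounds}), the reached configuration $D^\ast$ is a general position set of $G_j$ with $|D^\ast|=a+b=|S|$. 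The whole of (ii) thus reduces to the single inequality $\mob(G_j)\ge|S|$, which yields $b\le\mob(G_j)$ when $a=0$ and the strict bound $b<\mob(G_j)$ when $a=1$.

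It therefore remains to prove that $D^\ast$ is a \emph{mobile} set of $G_j$, and I expect this to be the main obstacle. The natural strategy is to project the remainder of the original traversal onto $G_j$: moves with both endpoints in $V(G_j)$ would be kept (general position inside $G_j$ depends only on the positions in $V(G_j)$, so such a move is legal in $G_j$ provided the projected configuration matches the original one), moves inside other components would be deleted, and the excursions in which a robot leaves and later re-enters $G_j$ through $v$ would have to be contracted. Since $D^\ast$ already contains all the robots, nothing needs to be ``imported'', so the sole difficulty is that contracting an excursion leaves a robot stranded at $v$, which may block a subsequent move onto $v$ or destroy general position. I would try to resolve this by showing that the visits to $V(G_j)$ never truly require a robot to be parked outside $G_j$—whenever the original process frees $v$ by sending a robot into another component, one can instead free $v$ by an internal move into $C_j$—so that the external excursions can be postponed until $G_j$ has been completely serviced. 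Verifying that this rescheduling is always legal, together with the residual case in which $v$ cannot be freed by an internal move, is the crux of the argument.
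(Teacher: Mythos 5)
Your argument for (i) and for the component-counting parts of (ii) is exactly the paper's: the paper also rules out three occupied components (and two components each holding $\ge 2$ robots) by looking at the first moment $v$ is occupied and invoking (i), and it also derives both inequalities by observing that at that moment the whole configuration sits inside $V(G_j)$ and must be a mobile set of $G_j$. The one place where you stop short --- proving that the configuration $D^\ast=\{v\}\cup T$ reached at the first visit to $v$ really is a \emph{mobile} set of $G_j$, given that the original traversal may send robots on excursions out of $G_j$ --- is precisely the step the paper asserts in one sentence without justification, so you have correctly isolated the only non-routine point.

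That said, your proposed fix (rescheduling the traversal so that excursions are postponed, and worrying about whether $v$ can be freed by an internal move) is more complicated than necessary and you do not carry it out, so as written the proof is incomplete. The gap closes without any rescheduling: simply \emph{park} the excursion robot at $v$ for the duration of its excursion. If a robot sits at $x\in V(C_m)$, $m\ne j$, while the remaining robots occupy $T'\subseteq V(C_j)$, then every shortest path from $x$ to a vertex of $C_j$ factors through $v$, so $\{x\}\cup T'$ is in general position in $G$ if and only if $\{v\}\cup T'$ is in general position in $G_j$; hence every move made inside $C_j$ during the excursion is also legal with the robot parked at $v$. Moreover no inside robot can legally step onto $v$ while a robot is outside (with $|T'|\ge 1$ this creates three in a line through $v$), so the parked robot never blocks a move, and every vertex of $C_j$ is necessarily visited by a robot that is inside $G_j$ at that instant, so the projected traversal still covers all of $G_j$. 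With this observation your reduction $|S|\le\mob(G_j)$ (respectively $b+1\le\mob(G_j)$ when $a=1$) is complete; the same remark is needed to justify the paper's own case $|S\cap V(C_i)|=0$.
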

	
	\begin{proof}
		(i) Suppose that $v\in S$, $v_j \in S\cap V(C_j)$ and $v_k \in S\cap V(C_k)$, where $j\ne k$. Then $v$, $v_j$ and $v_k$ are not in general position. 
		
		(ii) Assume $v\notin S$. Suppose that there exist vertices $v_i \in S\cap V(C_i)$, $v_j \in S\cap V(C_j)$ and $v_k \in S\cap V(C_k)$, where $|\{i,j,k\}| = 3$. Consider now the moment when the vertex $v$ is visited for the first time by some robot. At that moment we get a contradiction with (i). It follows that $S\subseteq V(C_i)\cup V(C_j)$ for some $i,j\in [k]$. By a similar argument we see that $S$ has at most one vertex in $C_i$ or $C_j$, without loss of generality assume $|S\cap V(C_i)|\le 1$. If $|S\cap V(C_i)| = 0$, then $S = S\cap V(C_j)$ and hence $S$ is in particular a mobile gp-set of $C_j$. Thus $|S| = |S\cap V(C_j)| \le \mob(G_j)$. 
		
		Assume second that $|S\cap V(C_i)| = 1$. The statement clearly holds if $|S\cap V(C_j)| = 1$, hence we may consider the case when $|S\cap V(C_j)| \ge 2$. Then the vertex $v$ must be visited by the unique vertex from $S\cap V(C_i)$, for otherwise we are in contradiction with (i). At that moment, $(S\cap V(C_j))\cup \{v\}$ is a mobile gp-set, which in turn implies that $|S\cap V(C_j)| < \mob(G_j)$.  
	\end{proof}
	
	
	
	
	
	\begin{theorem}
		\label{thm:block}
		If $G$ is a block graph, then $\mob(G) = \omega(G)$.
	\end{theorem}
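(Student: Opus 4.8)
The plan is to establish the two inequalities $\mob(G)\ge\omega(G)$ and $\mob(G)\le\omega(G)$, using throughout that in a block graph the maximal cliques are exactly the blocks, so $\omega(G)$ is the order of a largest block. For the lower bound I would exhibit a mobile set of size $\omega(G)$. Let $B$ be a largest block and put one robot on each of its vertices; as $B$ is a clique this is a general position set. Keeping all robots but one fixed, I let the robot on a cut-vertex $c\in B$ make a depth-first tour of the union of the components of $G-c$ not containing $B\setminus\{c\}$, return to $c$, and then repeat for each cut-vertex of $B$ in turn. The point to verify is that general position is preserved: whenever the single moving robot sits at a vertex $w$ outside $B$, every shortest path from $w$ to a stationary robot $a\in B\setminus\{c\}$ runs through $c$, so $a\ne c$ never lies between $w$ and another stationary robot $b$, and $w$ is never between the adjacent pair $a,b$. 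Since every vertex of $G$ lies in $B$ or in a branch hanging off some cut-vertex of $B$, all vertices are visited, giving $\mob(G)\ge\omega(G)$.

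For the upper bound I would argue by strong induction on $n(G)$, the base case $G=K_n$ being immediate since then $\mob(G)=n(G)=\omega(G)$. If $G$ is not complete it has a cut-vertex $v$; write $C_1,\dots,C_k$ for the components of $G-v$ and $G_i=G[V(C_i)\cup\{v\}]$, each a block graph on fewer vertices with $\omega(G_i)\le\omega(G)$, so $\mob(G_i)=\omega(G_i)$ by the inductive hypothesis. Let $S$ be a mobile gp-set; I may assume $|S|\ge 3$, as otherwise $|S|\le 2\le\omega(G)$. Apply Lemma~\ref{lem:cut-vertex}. If $v\notin S$, then the inequalities of part (ii) give, whether or not $S$ meets $C_i$, that $|S|=|S\cap V(C_i)|+|S\cap V(C_j)|\le\mob(G_j)=\omega(G_j)\le\omega(G)$ at once. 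The remaining possibility $v\in S$ is where the real work lies: part (i) only tells me $S\subseteq V(G_i)$ for a single $i$, and I must upgrade this to the statement that $S$ is a mobile gp-set of the smaller graph $G_i$, which then yields $|S|\le\mob(G_i)=\omega(G_i)\le\omega(G)$.

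The main obstacle is therefore showing that a mobile set confined to one branch $G_i$ remains mobile inside $G_i$; the danger is that the robots exploit excursions through $v$ into the other components to rearrange themselves in ways impossible within $G_i$ alone. I would dispose of this by proving such excursions are inessential. The key sub-claim is that, since $|S|\ge 3$, at most one robot can ever lie outside $G_i$ at a given moment: a second robot leaving would have to occupy $v$, but at that instant the robot already outside (in some $C_j$) together with at least one robot still in $C_i$ would place occupied vertices in two distinct components while $v$ is occupied, contradicting Lemma~\ref{lem:cut-vertex}(i). Consequently every departure from $G_i$ is a single robot making a round trip that begins and ends at $v$, and I can simulate the strategy inside $G_i$ by ``parking'' that robot at $v$ for the duration of its excursion, deleting the out-of-$G_i$ moves. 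This simulation preserves general position: for two stationary robots $a,b\in C_i$ and the true outside position $w$, convexity of $C_i$ guarantees $v$ is not between $a$ and $b$, while collinearity of $\{a,b,v\}$ would force $a$ (or $b$) onto a shortest $b$--$v$ path and hence onto a shortest $b$--$w$ path, contradicting that $\{a,b,w\}$ is in general position. Since the parked configuration visits every vertex of $G_i$ that the original strategy did, $S$ is mobile in $G_i$, completing the induction.
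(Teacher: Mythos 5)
Your proof is correct and follows essentially the same route as the paper's: the lower bound comes from placing robots on a maximum clique (block) and touring the branches one robot at a time, and the upper bound from induction combined with Lemma~\ref{lem:cut-vertex} at a cut-vertex. The only differences are cosmetic or expository --- you induct on $n(G)$ at an arbitrary cut-vertex rather than on the number of blocks at an end block, and you spell out the ``parking'' argument showing that in the case $v\in S$ the set remains mobile inside $G_i$, a point the paper compresses into ``follows by Lemma~\ref{lem:cut-vertex}(i) and induction.''
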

	
	\begin{proof}
		Let $Q$ be a clique of $G$ with $n(Q) = \omega(G)$. Then we easily see that $V(Q)$ is a mobile gp-set, hence $\mob(G) \ge \omega(G)$. 
		
		To prove that $\mob(G) \le \omega(G)$, we proceed by induction on the number of blocks of $G$. If $G$ has only one block, then $G$ is a complete graph for which we know that $\mob(G) = n(G) = \omega(G)$. Suppose now that $B$ is an end block of $G$. Then $B$ contains exactly one cut-vertex of $G$, denote it by $v$. Let $S$ be an arbitrary mobile gp-set of $G$. If $v\in S$, then the assertion follows by Lemma~\ref{lem:cut-vertex}(i) and induction. Assume secondly that $v\notin S$. Let $H$ be the subgraph of $G$ induced by $(V(G)\setminus V(B))\cup \{v\}$. By Lemma~\ref{lem:cut-vertex}(ii) we get that $|S| \le 1 + (\mob(H) -1)$ or $|S| \le 1 + (\mob(B) -1)$. The induction assumption now gives $|S| \le 1 + (\omega (H) -1) = \omega(H) \le \omega(G)$, or $|S| \le 1 + (\omega (B) -1) = \omega(B) \le \omega(G)$. 
	\end{proof}
	
	Theorem~\ref{thm:block} clearly implies that $\mob(K_n) = n$ for $n\ge 2$ and that $\mob(T) = 2$, where $T$ is a tree of order at least $2$. 
	
	A \emph{rooted graph} is a connected graph with one chosen vertex called the \emph{root}. Let $G$ be a graph and let $H$ be a rooted graph with root $x$. The \emph{rooted product graph} $G\circ_x H$ is the graph obtained from $G$ and $n(G)$ copies of $H$, say $H_1,\dots, H_{n(G)}$, by identifying the root of $H_i$ with the $i^{\rm th}$ vertex of $G$. If $w\in V(H)$, then the vertex from $H_v$ corresponding to $w$ will be denoted by $(v,w)$.   
	
	\begin{theorem}
		\label{thm:rooted}
		If $G$ and $H$ are graphs and $x\in V(H)$, then 
		$$\max\{ \mob(G), \mob(H)\} \le \mob( G\circ_x H) \le \max\{ \mob(H), n(G)\}\,.$$  
		Moreover, the bounds are sharp. 
	\end{theorem}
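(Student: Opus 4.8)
The plan is to work throughout with the structure of shortest paths in $G\circ_x H$. First I would record that the roots $\{(v,x): v\in V(G)\}$ induce an isometric copy of $G$ and that each copy $H_v$ is isometrically embedded, since any path leaving a copy must pass through its root; concretely $d_{G\circ_x H}((u,w),(u',w')) = d_H(w,x)+d_G(u,u')+d_H(x,w')$ when $u\ne u'$, and equals $d_H(w,w')$ when $u=u'$. The single most useful consequence is the following general-position (collinearity) fact, which is the translation of Lemma~\ref{lem:cut-vertex}(i) to the cut-vertex $(v,x)$: in any occupied configuration, if both the root $(v,x)$ and some interior vertex $(v,w)$ (with $w\ne x$) of a copy are occupied, then every occupied vertex lies in $V(H_v)$; equivalently, a single robot strictly inside a copy, together with any robot outside that copy, forbids the root from being occupied. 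I will also use repeatedly that the root is the only vertex through which a robot may enter or leave a copy.

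For the lower bound I would give two constructions. For $\mob(G)\le \mob(G\circ_x H)$, place a mobile gp-set of $G$ on the corresponding roots; as the roots form an isometric $G$ this is a general position set, and $G$-moves are legal at the root level. Whenever a robot sits at a root $(u,x)$ in a (general position) root configuration $R$, I send it on an excursion through all of $H_u$ and back: because $R$ is in general position in $G$, the collinearity fact shows the excursing robot is never on, and never creates, a forbidden shortest path, so it may roam $H_u$ freely while the others wait. Visiting every root by $G$-mobility and launching such an excursion at each one covers all of $G\circ_x H$. For $\mob(H)\le \mob(G\circ_x H)$, place a mobile gp-set $S$ of $H$ inside one copy $H_{v_0}$, use the $H$-traversal to cover $H_{v_0}$, then maneuver (moves are reversible) to a reachable configuration $S'$ with the root $(v_0,x)$ occupied; detaching that robot as a roamer while $S'\setminus\{(v_0,x)\}$ stays put, the general position of $S'$ guarantees via the collinearity fact that the roamer may visit every vertex outside $H_{v_0}$.

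For the upper bound, let $S$ be a mobile gp-set with $|S|=\mob(G\circ_x H)$. If $|S|\le n(G)$ we are done, so assume $|S|>n(G)$ and aim for $|S|\le\mob(H)$. First I would show a \emph{gathering} must occur: by pigeonhole some copy always contains at least two robots, hence an interior robot, and since the root of that copy must eventually be visited, the collinearity fact forces that at the visiting moment either the copy's interior is empty or all robots lie in that copy; but emptying $\ge 2$ interior robots makes the ``two-to-one'' transition itself pass through a configuration with the root and an interior vertex simultaneously occupied, so in every case all robots are driven into a single copy $H_v$ at some moment $t_0$. The key step is then the invariant that, from $t_0$ on (and, running the reversible moves backwards, for all time), \emph{at most one robot is ever outside $H_v$}: if one robot were already outside and a second tried to leave, it would occupy the root while the remaining $\ge |S|-2\ge 1$ robots of $H_v$ keep an interior vertex occupied, contradicting the collinearity fact. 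Finally, since a lone outside robot imposes on the robots of $H_v$ exactly the same constraints as a phantom robot at the root, projecting the traversal to $H_v$ (sending the at-most-one outside robot to the root) yields a genuine traversal of $H$ by the projected set $\widehat S$, which is therefore a mobile gp-set of $H$; hence $|S|=|\widehat S|\le\mob(H)$.

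The hard part will be this invariant together with its clean translation into an $H$-traversal: I must verify that the projection never places two robots at the root (guaranteed by the ``at most one robot outside'' invariant, together with the fact that for $|S|\ge 3$ the copy $H_v$ always retains an interior robot, so its interior is never empty) and that $x$ itself is visited (it is, precisely at the zero-outside instant when a robot first reaches the root). For sharpness I would take $G=K_a$ and $H=K_b$ rooted at any vertex: here $\mob(K_a)=a$, $\mob(K_b)=b$ and $n(K_a)=a$, so the lower bound $\max\{a,b\}$ and the upper bound $\max\{b,a\}$ coincide, forcing $\mob(K_a\circ_x K_b)=\max\{a,b\}$ and exhibiting both inequalities as attained.
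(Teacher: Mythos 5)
Your proof follows essentially the same route as the paper's. The two lower-bound constructions (a mobile gp-set of $G$ placed on the roots, with excursions into each copy; and a mobile gp-set of $H$ placed in one copy whose root-visiting robot then roams the rest of the graph) are exactly the paper's, and the upper bound likewise runs through the pigeonhole to locate a copy $H_v$ with two robots, the collinearity fact at the cut-vertex $(v,x)$ (the paper simply invokes its Lemma~2.1 rather than re-deriving the invariant from the dynamics as you do), and the conclusion that the configuration is essentially a mobile set of $H$; your sharpness example $K_a\circ_x K_b$ generalises the paper's $K_n\circ_x K_2$ and $K_2\circ_x K_n$.

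One caution about your projection step: the claim that a lone outside robot imposes ``exactly the same constraints as a phantom robot at the root'' is not literally true. Since $d(p,y)=d_H(p,x)+d(x,y)$ for $p\in V(H_v)$ and $y$ outside, an outside robot at $y\neq(v,x)$ is never interior to a shortest path between two inside robots, whereas the root can be: two inside robots at $p_1,p_2$ can be in general position together with $y$ even though $x$ lies strictly between $p_1$ and $p_2$ in $H$ (take $H=C_4$ with $p_1,p_2$ the neighbours of the root). So the configuration obtained by sliding the outside robot to the root may fail to be in general position, and the projected sequence need not be a legal traversal of $H$ as you assert. This is patchable (for instance by projecting only at the instants the outside robot actually occupies the root and controlling what the inside robots can do in between), and in fairness the paper's own phrase ``at this point the robots form a mobile set of $H_v$'' is no more detailed at the corresponding step; but as written your justification of the final inequality $|S|\le\mob(H)$ rests on a statement that is false in general.
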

	
	\begin{proof}
		Let $S$ be a mobile gp-set of $G$. Then we claim that $S$ considered as a subgraph of  $G\circ_x H$ is a mobile set of $G\circ_x H$. Indeed, if $v\in S$, then $v$ can be moved to every vertex of $H_v$ by maintaining the general position property. On the other hand, if $v\notin S$, then in the subgraph $G$ of $G\circ_x H$, some robot can move to $v$ and then continue visiting all the vertices of $H_v$. Hence $\mob( G\circ_x H) \ge |S| = \mob(G)$. 
		
		Let $S$ be a mobile gp-set of $H$. Then a copy of $S$ in an arbitrary $H_v$ is a mobile set of $G\circ_x H$. Indeed, after a robot inside $H_v$ visits the vertex $(v,x)$, this robot can freely move around $V(G\circ_x H)\setminus V(H_v)$ whilst maintaining the general position property. Thus $\mob( G\circ_x H) \ge |S| = \mob(H)$. This proves the lower bound. 
		
		Suppose now that $S\subseteq V(G\circ_x H)$, where $|S| > n(G)$, is a mobile set. By the pigeonhole principle there exists $v \in V(G)$ such that $|S \cap V(H_v)| \ge 2$. By Lemma~\ref{lem:cut-vertex} we have $|S\cap (V(G\circ_x H)\setminus V(H_v))|\le 1$. If $S\cap (V(G\circ_x H)\setminus V(H_v)) = \emptyset$, then clearly $|S| \le \mob(H)$. Otherwise, let $\{y\} = S\cap (V(G\circ_x H)\setminus V(H_v))$. Then the vertex $(v,x)$ must be visited by the robot from $y$ and at this point the robots form a mobile set of $H_v$. We conclude again that $|S| \le \mob(H)$.       
		
		Noting that $\mob(K_n \circ_x K_2) = \mob(K_2 \circ_x K_n) = n$ for $n\ge 2$, we infer that the bounds are sharp. 
	\end{proof}

	The next result shows in particular that the mobile position number of a rooted product can lie strictly between the bounds of Theorem~\ref{thm:rooted}. Let $G$ be a unicyclic graph with unique cycle $C$ of length $\ell $; we will identify the vertex set of $C$ with $\mathbb {Z}_{\ell }$ in the natural manner. Let $k \leq \ell$ be the number of vertices of $C$ that have degree at least three; we will call such a vertex a \emph{root}. If $\ell \notin \{ 4,6\}$, then it is trivial that $\mob (G) \geq 3$, as three robots can traverse $C$ in general position as described in Subsection~\ref{sec:prelim}, visiting the vertices of any pendent tree on their way. 
	
	We note firstly that if $x$ is a root of $C$, with attached trees $T_1, \dots, T_r$, then there can be at most one robot on the vertices in $\{ x\} \cup \left(\bigcup _{j=1}^{r}V(T_j)\right)$ at any time if $\mob(G) \geq 3$. By Theorem~\ref{thm:block} there can be at most two robots on $\{ x\} \cup V(T_j)$ for any $1 \leq j \leq r$. When a robot visits $x$ for the first time there will be a robot in a tree attached to $x$, say $T_1$. Since $x$ is a cut-vertex there can be no robots on the vertices of $V(G) \setminus (\{ x\} \cup V(T_1))$, so that there are only two robots on $G$. Therefore when analysing unicyclic graphs without loss of generality we can assume that $G$ is a subgraph of a sun graph, i.e. any vertex on the cycle of $G$ has degree two or three and any attached tree is a leaf. For simplicity in the following result we only deal with the case that both $\ell $ and $k$ are even; similar results are possible in the other cases by a slightly more involved argument.   
	
	\begin{theorem}
		Let $G$ be a unicyclic graph with cycle length $\ell $ such that there are $k\geq 2$ vertices of the cycle with degree at least $3$. If both $k$ and $\ell $ are even, then $\mob (G) \leq \frac{k}{2}+2$ and this is tight.
	\end{theorem}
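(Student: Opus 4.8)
The plan is to retain the normalisation established just before the statement, so that $G$ is a subgraph of a sun graph: the cycle $C$ is identified with $\mathbb{Z}_\ell$, exactly $k$ of its vertices (the roots) carry a single pendant leaf, and the remaining $\ell-k$ cycle vertices have degree two. Write $m = \mob(G)$ and fix a mobile gp-set $S$ with $|S|=m$, together with a sequence of legal moves witnessing mobility. The first task is to understand the general position condition in this graph. Since a leaf has degree one it can never be an interior vertex of a geodesic, so a leaf-robot can never be the \emph{middle} vertex of a violation. Projecting each leaf-robot onto its root, one checks that the occupied vertices are in general position if and only if no occupied cycle vertex lies in the interior of a shortest arc joining the projections of two robots. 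In particular an occupied root and an occupied leaf at that same root cannot coexist, and, since four cycle vertices can never be in mutual general position (the three arc-balance inequalities are contradictory), at most three robots stand on the cycle at any instant.

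This last fact is the \emph{engine} of the upper bound: at every moment at least $m-3$ robots occupy leaves, hence at least $m-3$ roots have an occupied leaf throughout the whole process. I would then localise the argument at the instant a robot visits a cycle vertex $v$. Writing $a,b$ for the nearest occupied projections on either side of $v$, the general position condition forces $d(a,v)+d(v,b) > \ell/2$; consequently every other robot projects into the complementary open arc, which has length strictly less than $\ell/2$. Thus at this instant the at least $m-3$ occupied leaves sit at roots contained in a single arc of length $<\ell/2$. Letting $v$ range over all cycle vertices (each of which must be visited) and combining the resulting constraints — the occupied leaves must fit into a short arc avoiding whichever cycle vertex is currently being visited — yields the bound. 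The cleanest way to turn this into the exact inequality $m\le \frac{k}{2}+2$ is to count how many roots an arc of length $<\ell/2$ can contain; the extremal case is that of evenly spread roots, and here the hypotheses that $k$ and $\ell$ are both even pin down the additive constant.

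I expect the delicate point to be precisely this constant. A naive one-instant count gives only $m\le \frac{k}{2}+3$, and squeezing out the final unit requires using more than one critical instant: for instance, comparing the configurations forced when two near-antipodal cycle vertices are visited, so that the cluster of at least $m-3$ occupied leaves would have to occupy short arcs on opposite sides while only three robots are ever available to ferry leaves across the cycle. The parity assumptions guarantee that the two forbidden half-arcs overlap in a controlled way, and it is this overlap that removes the spurious extra robot. I would treat this as the main obstacle and handle it by a careful amortised count of how often each root's leaf can be occupied, rather than by a single snapshot.

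For sharpness I would exhibit a sun-type graph attaining the bound. Take the $k$ roots evenly spaced around $C$ (both $k$ and $\ell$ even) and design a choreography in which roughly $\frac{k}{2}-1$ robots are parked on the leaves of a contiguous block of roots spanning just under half of the cycle, while the remaining robots sweep the complementary half, which is long enough to satisfy the arc condition. Rotating this block one root at a time around the cycle — each rotation implemented by moving a parked robot down onto the cycle, advancing it, and lifting a robot onto the next leaf, always keeping at most three robots on $C$ — lets the robots visit every leaf and every degree-two vertex while remaining in general position. Verifying that the arc condition $d(a,\cdot)+d(\cdot,b)>\ell/2$ holds at each of the finitely many move types is routine but tedious, and it is here that the evenness of $k$ and $\ell$ makes the counts come out to exactly $\frac{k}{2}+2$.
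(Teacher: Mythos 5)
Your skeleton matches the paper's in spirit: normalise to a sun subgraph, bound the number of robots on the cycle by $3$, and trap the remaining (leaf) robots in a half-arc via the condition $d(a,v)+d(v,b)>\ell/2$ (the paper phrases this with its ``$i$-sections'', which are exactly your half-arcs). But the proposal has a genuine gap at the decisive step. You concede that the single-snapshot count only yields $m\le \frac{k}{2}+3$ and that removing the last robot ``requires using more than one critical instant'', to be handled by ``a careful amortised count'' --- but no such count is given, and this is precisely where the content of the theorem lives. The paper closes this case by a different and quite specific mechanism: if $m=\frac{k}{2}+3$, the section count forces \emph{every} half-arc to contain exactly $\frac{k}{2}$ roots, which implies that the roots pair up antipodally on $C$; one then locates a robot $R_1$ on the cycle and two leaf robots at an antipodal pair of roots $x_2,x_2'$, and $R_1$ lies on a shortest path between them, a contradiction. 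Your vague plan of ``comparing the configurations forced when two near-antipodal cycle vertices are visited'' does not produce the antipodal pairing of roots, nor the final three-in-a-line, and I do not see how an amortised count over instants would substitute for it: the obstruction is a single static configuration forced by equality, not a budget violated over time.

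Two further points. The sharpness half is also only a sketch: ``rotating a block of $\frac{k}{2}-1$ parked robots'' with verification declared ``routine but tedious'' is not a construction; the paper gives an explicit jellyfish choreography (robots launched one at a time from the leaves $0',\dots,(\frac{k}{2}+1)'$ around the cycle), and checking the arc condition after each move is exactly the part that cannot be waved away. Finally, your projection characterisation of general position needs a little care (a robot standing on a root, rather than a leaf, is itself an endpoint-or-interior vertex of geodesics through that root, and the case $d(a,b)=\ell/2$ with two shortest arcs must be handled), though these are repairable details; the missing elimination of $\frac{k}{2}+3$ is not.
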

	
	\begin{proof}
		As described previously, we can assume that $G$ is a subgraph of a sun graph. For any $i \in \mathbb{Z}_{\ell}$ we call the set $\{ i,i+1,\dots ,i+\frac{\ell }{2}-1\} $ and any attached leaves the \emph{$i$-section} of the cycle. Observe that if a robot is stationed at a vertex $x$ of $C$, then the shortest $x_1,x_2$-path containing $x$ must be of length at least $\frac{\ell }{2} + 1$, for otherwise there would be a shortest $x_1,x_2$-path in $G$ through $x$.  
		
		Suppose for a contradiction that $\mob(G) \geq \frac{k}{2}+3$. Either the $0$-section or $\frac{\ell }{2}$-section must contain $\leq \frac{k}{2}$ roots of $C$; we shall assume that the $0$-section has this property. If there are $\frac{k}{2} +2$ robots contained in the $0$-section, then $\frac{k}{2}$ of them must be positioned on leaves and the other two are on vertices of the cycle; then it is easily seen that there are three robots not in general position, possibly considering another robot from the $\frac{\ell}{2}$-section. Hence there are at most $\frac{k}{2}+1$ robots on the $0$-section. Furthermore, if there are $\frac{k}{2}+1$ robots in the $0$-section, then we can assume that there are robots stationed on leaves attached to vertices $i_1,i_2, \dots , i_{\frac{k}{2}}$ (where $0 \leq i_1 <i_2 < \dots < i_{\frac{k}{2}} \leq \frac{\ell }{2}-2$) and a robot on a vertex $i_{\frac{k}{2}+1}$ of $C$, where $i_{\frac{k}{2}} < i_{\frac{k}{2}+1} < \frac{\ell }{2}$.  
		
		Firstly, suppose that there are at least $\frac{k}{2}+4$ robots. Then as the $0$-section contains at most $\frac{k}{2}+1$ robots, there are at least three robots on the $\frac{\ell }{2}$-section. Consider the middle robot $R$ among any such three robots and suppose this robot is at the vertex $y$ or at a leaf attached to $y$. Then $R$ must be stationed on the leaf attached to $y$, otherwise it is on a shortest path between the other two vertices of the $\frac{\ell }{2}$-section. Then no robot can visit the vertex $y$, a contradiction.
		
		Now suppose that there are exactly $\frac{k}{2}+3$ robots. If any section contains less than $\frac{k}{2}$ roots, then the above argument yields a contradiction, so we can assume that every section of $C$ contains exactly $\frac{k}{2}$ roots. This implies that if $x,x'$ is any pair of antipodal vertices on $C$, then either both $x,x'$ are roots or neither $x,x'$ are roots. As the $0$-section of $G$ contains $\frac{k}{2}$ roots, there must be at least two robots $R_1$ and $R_2$ at vertices $x_1,x_2$ of $C$ or attached leaves in the $\frac{\ell }{2}$-section. If $R_1$ is stationed on a leaf at some point it must descend to a vertex of $C$ in order to visit the root, so we can assume that $R_1$ is on $C$. When this occurs, we consider the two robots at shortest distance from $R_1$ on either side of $R_1$ with respect to the cycle. Since they must be at distance bigger than $\frac{\ell }{2}$, at least one of then must be in the $0$-section. As the $0$-section can hold at most $\frac{k}{2}+1$ robots, we can assume that $R_1$ is on $C$ and $R_2$ is on a leaf attached to $x_2$ in the $\frac{\ell }{2}$-section. Now by the preceding argument the vertex $x_2'$ antipodal to $x_2$ on $C$ is also a root in the $0$-section; as there are $\frac{k}{2}$ roots and $\frac{k}{2} + 1$ robots in the $0$-section, there must also be a robot $R_2'$ on the leaf attached to $x_2'$. However, this implies that the robot $R_1$ lies on a shortest path between the robots $R_2$ and $R_2'$. As a conclusion we get that $\mob (G) \leq \frac{k}{2}+2$. 
		
		We now show that this bound is tight. For $\ell \geq k$ we define the {\em $(\ell,k)$-jellyfish} to be the unicyclic graph with order $\ell+k$ formed from an $\ell $-cycle $C_{\ell }$ (with vertex set identified with $\mathbb{Z}_{\ell }$) with leaves attached to the vertices $0,1,\dots ,k-1$. We will denote the leaf attached to vertex $i$ by $i'$. We describe how $\frac{k}{2}+2$ vertices can visit every vertex of the jellyfish whilst staying in general position. We begin with robots at the vertices $0',1',\dots ,(\frac{k}{2}+1)'$ (call the robots $R_0,R_1$ etc). The first robot $R_0$ makes the move $0' \move 0$ and then moves around $C_{\ell }$ in the direction $0 \move \ell-1 \move \ell-2 \move \cdots \move \frac{k}{2}+2$. When robot $R_0$ visits vertices $k-1,k-2,\dots ,\frac{k}{2}+2$ it can also visit the attached leaves $(k-1)',(k-2)',\dots ,(\frac{k}{2}+2)'$. Finally robot $R_0$ moves to the leaf $(\frac{k}{2}+2)'$. We now send robot $R_1$ around the cycle in the same direction and station it at leaf $(\frac{k}{2}+3)'$ and in general for $1 \leq i \leq \frac{k}{2}-3$ we send robot $R_i$ around the cycle to leaf $(\frac{k}{2}+2+i)'$ in sequence. At this point there are robots at the leaves $(\frac{k}{2}-2)',(\frac{k}{2}-1)',\dots ,(k-1)'$ and all vertices of $G$ have been visited with the exception of the leaves $(\frac{k}{2}-2)',\dots,(\frac{k}{2}+1)'$. Now robot $R_{\frac{k}{2}-2}$ moves $(\frac{k}{2}-2)' \move \frac{k}{2}-2$ and moves around the cycle in the same direction \[\frac{k}{2}-2\move \frac{k}{2}-3 \move \cdots 0 \move \ell -1 \move \cdots \move k,\] stopping at vertex $k$. Next, robot $R_{\frac{k}{2}-1}$ performs the move $(\frac{k}{2}-1)' \move \frac{k}{2}-1$. Finally, by symmetry, it follows that vertices $\frac{k}{2}$ and $\frac{k}{2}+1$ can also be visited.
	\end{proof}
	Note that if $\ell = k$ then the graph in question is a rooted product.

	\section{Kneser graphs and line graphs of complete graphs}
	\label{sec:kneser}
	
	If $n\ge 2k$, then the {\em Kneser graph} $K(n,k)$ has all $k$-subsets of $[n]$ as vertices, two vertices being adjacent if the corresponding sets are disjoint. In~\cite[Theorem 2.2]{ghorbani-2021} it was proved that $\gp(K(n,2)) = 6$ for $n\in \{4,5,6\}$ and $\gp(K(n,2)) = n-1$ for $n\ge 7$. For additional results on the gp-number of Kneser graphs see~\cite{patkos-2020}. 
	
	The Kneser graph $K(5,2)$ is the Petersen graph for which we have seen in Section~\ref{sec:prelim} that $\mob(K(5,2)) = 4$. This fact generalises as follows. 
	
	\begin{theorem}
		\label{thm:kneser}
		If $n\geq 5$, then $\mob(K(n,2)) = \max \{ 4,\left \lfloor \frac{n-3}{2} \right \rfloor\} $.
	\end{theorem}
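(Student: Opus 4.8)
The plan is to exploit the very simple metric structure of $K(n,2)$. For $n\ge 5$ the graph has diameter $2$: two $2$-sets are adjacent when disjoint and at distance $2$ when they meet in a single element, and the shortest $\{a,b\},\{a,c\}$-paths of length $2$ run through the $2$-sets disjoint from $\{a,b,c\}$. Hence three distinct vertices fail to be in general position exactly when two of them, say $x$ and $y$, meet in one element while the third is disjoint from $x\cup y$. Encoding a vertex set $S$ as a graph $H$ on $[n]$ whose edges are the $2$-sets in $S$, this says that $S$ is a general position set if and only if $H$ is a matching, or every edge of $H$ meets a common $3$-element set (the vertex set of some $P_3$ in $H$). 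The two large families are therefore matchings (the cliques of $K(n,2)$, of size at most $\lfloor n/2\rfloor$) and stars (of size at most $n-1$); I will show that only matchings are usefully mobile.

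For the upper bound I would first record a rigidity lemma: if $|S|\ge 4$ and the current configuration is a matching, then every legal move yields another matching. Indeed, moving a robot from $\{a,b\}$ to a disjoint $2$-set $\{p,q\}$ that meets some other edge $\{c,d\}$ (say $p=c$) creates a $P_3$ on $\{c,d,q\}$, whereupon general position forces all $s-2$ remaining matching edges to meet $\{c,d,q\}$; but at most one of them can, so $s\le 3$, a contradiction. I would then treat non-matching configurations of size $\ge 5$ using the structure above: they are either stars, which are completely frozen once $s\ge 5$ (any move exposes a $P_3$ disjoint from the moved edge), or are confined to the six $2$-subsets of a fixed $4$-element set, from which no robot can escape without breaking general position. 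In either case such a configuration cannot visit all of $K(n,2)$, so in a mobile set with $s\ge 5$ robots \emph{every} reachable configuration is a matching.

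It then remains to bound how far a matching of size $s$ can roam in terms of the number of free elements $f=n-2s$. If $f\le 1$ no robot can move at all, since a move requires two unused elements for the new $2$-set. The crucial case is $f=2$: the only possible move replaces an edge $\{a,b\}$ by the free pair $\{g_1,g_2\}$ and frees $\{a,b\}$, so the $s$ robot-edges together with the free pair always constitute the \emph{same} perfect matching $P$ of the $2s+2$ elements involved, with only the ``hole'' moving around $P$. Thus the robots can occupy only the $s+1$ edges of $P$ and cannot traverse $K(n,2)$. Hence mobility forces $f\ge 3$, i.e. $s\le\lfloor(n-3)/2\rfloor$, and together with the trivial option $s\le 4$ this gives $\mob(K(n,2))\le\max\{4,\lfloor(n-3)/2\rfloor\}$.

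For the lower bound I would start from a matching of size $s=\lfloor(n-3)/2\rfloor$, which leaves $f\ge 3$ free elements, and show it is mobile by realising an arbitrary target vertex $\{x,y\}$: move the edge through $x$ onto a free pair to free $x$, then move the edge through $y$ onto a free pair avoiding $x$ (possible since at least two free elements other than $x$ remain), and finally slide some robot onto the now-free pair $\{x,y\}$. Every intermediate configuration is a matching, hence in general position, so all vertices are visited and $\mob\ge\lfloor(n-3)/2\rfloor$; the same construction yields $\mob\ge 4$ for $n\ge 11$, while $n=5$ is the Petersen computation already made and the finitely many cases $6\le n\le 10$ are handled by explicit four-robot schemes. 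I expect the main obstacle to be making the classification of non-matching general position configurations of size $\ge 5$ fully rigorous---proving that beyond stars and subsets of a $4$-set no other such configuration exists and that both surviving types are trapped---with the $f=2$ perfect-matching invariant being the clean engine of the upper bound and the small-$n$ lower-bound constructions the remaining delicate point.
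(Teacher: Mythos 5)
Your overall strategy is sound, and in places sharper than the paper's. The paper organises the upper bound statically, by the trichotomy of general position sets in $K(n,2)$ imported from the characterisations of Anand et al.\ and Ghorbani et al.\ (a clique of order $\ge 3$, i.e.\ a matching; a configuration confined to the six $2$-subsets of a $4$-set; an independent set, i.e.\ a star or triangle), and analyses mobility case by case. You organise it dynamically: your rigidity lemma (a matching of $\ge 4$ robots can only move to matchings) together with the perfect-matching invariant when $f=n-2s=2$ is a cleaner and more rigorous version of the paper's rather terse claim that for $2r=n-2$ a robot ``can only move to the vertex $\{n-1,n\}$ and back again''; your invariant pins the robots permanently to the $s+1$ edges of one fixed perfect matching, which is exactly the right statement. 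Your lower-bound routine for a matching with three free elements coincides with the paper's Case 1 construction.

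Two deferred pieces are genuinely needed, however. First, the classification and trapping of non-matching configurations with $\ge 5$ robots: the star case is as easy as you indicate, but for a configuration inside the $2$-subsets of a $4$-set you must actually rule out an escape move; the paper does this by noting that the configuration obtained after a robot leaves the $4$-set could contain neither an induced $K_2$ on $2$-subsets of that $4$-set nor a clique of order $\ge 3$, which is impossible with $\ge 5$ robots. This is fillable but not a formality. Second, and more seriously, the lower bound of $4$ in the range $5\le n\le 10$ cannot come from your matching construction (a mobile matching of size $4$ needs $n\ge 2\cdot 4+3=11$), so you must exhibit a non-matching witness; as written, your proof does not establish the theorem precisely where the maximum equals $4$. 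The paper supplies a single uniform witness valid for all $n\ge 5$: robots at $\{a,b\},\{c,d\},\{a,c\},\{a,d\}$, with explicit legal moves such as $\{a,c\}\move\{b,d\}$ and $\{a,d\}\move\{b,c\}$ to sweep the $2$-subsets of $\{a,b,c,d\}$, and $\{a,b\}\move\{c,e\}$ to migrate the $4$-element support, so no case-by-case check over $6\le n\le 10$ is needed. You should either adopt such a witness or produce the promised explicit schemes before the argument is complete.
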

	
	\begin{proof}
		For $n \geq 5$ the diameter of $K(n,2)$ is $2$. It follows from~\cite[Theorem 4.1]{AnaChaChaKlaTho} that a set $S$ of vertices of $K(n,2)$ is in general position if and only if it is an independent union of cliques. Moreover, from the proof of~\cite[Theorem 2.2]{ghorbani-2021} we recall that the structure of $S$ is one of the following: (i) $S$ consists of a clique of order at least $3$ (ii) the largest clique of $S$ is of order $2$, in which case $|S|\le 6$; and (iii) $S$ induces an independent set. We now discuss these structures in turn.
		
		\medskip\noindent
		\textbf{Case 1}: $S$ contains a clique of order at least $3$.\\ 
		We can assume that $S$ is of the form $\{ \{ 1,2\} ,\{3,4\} ,\dots ,\{ 2r-1,2r\}\} $ for some $r \geq 3$. If $2r \geq n-1$, then none of the robots can move, whereas if $2r = n-2$, then any robot of $S$ can only move to the vertex $\{ n-1,n\} $ and back again, so that not every vertex can be visited. 
		
		However, if $2r \leq n-3$ then $S$ is a mobile gp-set. There are three forms of vertex that need to be visited: (i) $\{ a,b\} $, where $a,b \notin [2r]$, (ii) $\{ c,d\}$, where $c \in [2r]$ and $d \notin [2r]$ and (iii) $\{ e,f\} $, where $e,f \in [2r]$, but $\{ e,f\} \notin S$. Without loss of generality we can assume that $\{ a,b\} = \{ n,n-1\} $, $\{ c,d\} = \{ 1,n\} $ and $\{ e,f\} = \{ 1,3\} $. These vertices can be visited by the following sequences of moves.
		
		Case (i): $\{ 1,2\} \move \{ n-1,n\}$.
		Case (ii): $\{ 1,2\} \move \{ n-2,n-1\} \move \{ 1,n\} $.
		Case (iii): The robot at vertex $\{ 1,2\} $ moves according to $\{ 1,2\} \move \{ n-1,n\} \move \{ 2,n-2\} $, so that the robots now occupy the set $\{ \{3,4\} ,\{5,6\} ,\dots, \{2r-1,2r\} \} \cup \{ \{ 2,n-2\} \}$. Now the robot at $\{ 3,4\} $ makes the following moves: $\{ 3,4\} \move \{ n-1,n\} \move \{ 1,3\} $.
		
		In summary, if $S$ contains a clique of order at least $3$, then since $|S|=r$ and $r\le (n-3)/2$, we deduce that $|S| \le \lfloor (n-3)/2\rfloor$. The aforementioned procedure also shows that a mobile general position set of cardinality $\lfloor (n-3)/2\rfloor$ exists, hence $\mob(K(n,2)) \ge \lfloor (n-3)/2 \rfloor$. 
		
		\medskip\noindent
		\textbf{Case 2}: $S$ contains an induced clique of order $2$ (and no triangle).\\ 
		By the result of~\cite{ghorbani-2021} if $S$ contains an induced copy of $K_2$, say on the vertices $\{ 1,2\} ,\{ 3,4\} $, then all vertices of $S$ are subsets of $\{ 1,2,3,4\} $. Firstly we show that there is such a mobile set with four robots. For any distinct $a,b,c,d \in [n]$ if robots are positioned at the vertices $\{ a,b\}$, $\{ c,d\} $, $\{ a,c\} $ and $\{ a,d\} $ then by the moves $\{ a,c\} \move \{ b,d\} $ and $\{ a,d\} \move \{ b,c\} $ the robots can visit every vertex that is a subset of $\{ a,b,c,d\} $ whilst remaining in general position. Suppose that we start with the robots at $\{ 1,2\} $, $\{ 3,4\} $, $\{ 1,3\} $ and $\{ 1,4\}$. Let $a,b \notin \{ 1,2,3,4\} $. The move $\{ 1,2\} \move \{ 3,a\} $ transforms the set of occupied vertices into a general position set of the same form, so that all subsets of $\{ 1,3,4,a\} $ can be visited. Furthermore, starting with robots at $\{ 1,2\} $ ,$\{ 3,4\} $, $\{ 1,3\} $ and $\{ 1,4\}$ the sequence of moves $\{ 1,2\} \move \{ 3,a\}$, $\{ 3,4\} \move \{ 1,a\} $ and $\{ 1,4\} \move \{ a,b\} $ allows any vertex of the form $\{ a,b\} $ to be visited.
		
		Suppose for a contradiction that there exists such a mobile set with $|S| \geq 5$. At some point a robot has to move to a vertex $\{ a,b\} $ that is not a subset of $\{ 1,2,3,4\} $; we can assume that immediately before this step there are robots on the vertices $\{ 1,2\} $, $\{ 3,4\} $, $\{ 1,3\}$, $\{ 2,4\} $ and $\{ 1,4\} $ (and possibly $\{ 2,3\} $) and we let $S'$ be the set of occupied vertices immediately after this step. $S'$ cannot contain any induced copy of $K_2$ on subsets of $\{ 1,2,3,4\} $ (otherwise we would have $\{ a,b\} \subset \{ 1,2,3,4\} $) and we can also assume by Case 1 that $S'$ does not contain a clique of order at least 3. Clearly this is impossible.   
		
		In summary, if $S$ contains an induced $K_2$ and no triangle, then $|S|\le 4$. Moreover, in this case we also have $\mob(K(n,2)) \ge 4$. 
		
		\medskip\noindent
		\textbf{Case 3}: $S$ is an independent set.\\ 
		There are two possible structures of independent set in $K(n,2)$. Either $S$ is of the form $\{ \{ 1,2\}, \{ 1,3\}, \{ 2,3\}\}$, or else is of the form $\{ \{1,2\}, \{1,3\}, \dots, \{1,r\}\}$ for some $r \in [n]$. In the first case $|S| \leq 3$. Suppose that $|S| = r-1 \geq 4$. Let $a,b \notin [r]$. Without loss of generality we can assume that the robot at $\{ 1,2\} $ makes the first move. Without loss of generality there are three types of move that the robot can make: (i) $\{ 1,2\} \move \{ a,b\} $, (ii) $\{ 1,2\} \move \{3,a\}$ and (iii) $\{ 1,2\} \move \{ 3,4\} $. In Cases (i) and (ii) $\{ 1,3\} \sim \{ a,b\} \sim \{ 1,4\} $ and $\{ 1,4\} \sim \{ 3a\} \sim \{ 1,5\} $ respectively would be shortest paths containing three robots, a contradiction. For Case (iii), if $r \geq 6$, then $\{ 1,5\} \sim \{ 3,4\} \sim \{1,6\} $ shows that there would be three robots in a line, whereas if $r = 5$ this move returns us to Case 2 above. 
		
		All possibilities have been considered, hence $\mob (K(n,2)) = \max \{ 4,\left \lfloor \frac{n-3}{2} \right \rfloor\}$ holds for $n \geq 5$.
	\end{proof}
	
	We now determine the mobility number of the complement of the Kneser graphs $K(n,2)$, i.e.\ the line graph $L(K_n)$ of $K_n$. Recall that the line graph $L(G)$ of a graph $G$ has $V(L(G)) = E(G)$, vertices being adjacent if the corresponding edges are incident in $G$. By the result of~\cite{ghorbani-2021} the general position number of this graph is $n$ if $3|n$ and $n-1$ otherwise; we will show that the mobile gp-number of these graphs is very close to the general position number.
	
	\begin{theorem}
		\label{thm:line-graphs}
		If $n \geq 4$, then $\mob (L(K_n)) = n-2$.
	\end{theorem}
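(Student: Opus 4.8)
The plan is to reuse the structural description of general position sets from the proof of Theorem~\ref{thm:kneser}. Since $L(K_n)$ has diameter two for $n\ge 4$, three vertices lie on a common geodesic exactly when one of them is adjacent to the other two and these two are non-adjacent; hence, as in \cite[Theorem 4.1]{AnaChaChaKlaTho}, a set $S$ is in general position if and only if $G[S]$ is a disjoint union of cliques. In $L(K_n)$ a clique is a family of pairwise intersecting edges of $K_n$, so it is contained either in a star (all edges through a fixed vertex) or in a triangle; moreover two distinct cliques of the decomposition have no edge between them, which forces their \emph{supports} (the sets of endpoints in $[n]$ of their edges) to be pairwise disjoint. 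Writing $\sigma(Q)$ for the support of a component $Q$ of $G[S]$, one checks that $|Q|=|\sigma(Q)|$ when $Q$ is a triangle and $|Q|=|\sigma(Q)|-1$ when $Q$ is any star (including a single edge). Summing over components gives $|S| = n - F - D$, where $F$ is the number of vertices lying in no edge of $S$ and $D$ is the number of star-components.

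For the upper bound I would show that no general position set with $|S|\ge n-1$ is mobile. Then $F+D\le 1$, leaving exactly three possibilities: $(F,D)=(0,0)$, so $|S|=n$ and $S$ is a disjoint union of triangles covering $[n]$; $(F,D)=(1,0)$, so $S$ is a union of triangles plus one free vertex; and $(F,D)=(0,1)$, so $S$ consists of triangles together with exactly one star and uses all of $[n]$. The key point is a $P_3$-obstruction: if a component $Q$ has at least two edges (a triangle, or a star of size $\ge 2$) and a robot of $Q$ moves to an edge $e'$ leaving $\sigma(Q)$, then the edge of $Q$ still incident to the vacated vertex becomes the midpoint of a forbidden $P_3$ (its two neighbours being $e'$ and another edge of $Q$, which are disjoint); the only apparent escape, moving a robot onto a free vertex, produces the same obstruction against the residual clique. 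Consequently, in each of the three configurations every robot is confined to the support of its own component (triangles cannot move at all, and a cherry or star can at most rotate within its support), so no edge joining two different supports — and, in the single-star case, no non-star edge inside the support — is ever occupied. Hence such an $S$ is not mobile and $\mob(L(K_n)) \le n-2$; the routine part is the short check that each listed configuration omits an edge it can never reach.

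For the lower bound I would exhibit an explicit \emph{triangle-free} mobile set of size $n-2$, since the argument above shows that triangles are immobile. Take the two-star configuration $S=\{\{1,i\}: 2\le i\le n-2\}\cup\{\{n-1,n\}\}$, a star at $1$ together with the single edge $\{n-1,n\}$, of size $n-2$ and support $[n]$. I would then build a rolling traversal from three legal moves: \emph{(absorb)} $\{n-1,n\}\move\{1,n-1\}$ merges the single edge into the star and frees the vertex $n$; \emph{(eject)} from a star with a free vertex $z$, the move $\{1,\ell\}\move\{\ell,z\}$ splits off a single edge $\{\ell,z\}$; and \emph{(transfer)} $\{1,\ell\}\move\{\ell,a\}$, where $a$ is an endpoint of the small star, turns the small star into a cherry that may then rotate through all three edges of its $3$-vertex support. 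Iterating absorb/eject lets the centre of the large star and the position of the small exploring star range over all of $[n]$, while a transfer-and-rotate step sweeps every edge among a triple of vertices; in particular, to reach an edge $\{i,j\}$ between two leaves one first positions the single edge at some $\{i,x\}$ and then transfers leaf $j$ onto it, producing $\{i,j\}$ directly. Organised as a systematic sweep, this visits every edge of $L(K_n)$ while preserving the disjoint-union-of-cliques property at each stage, giving $\mob(L(K_n)) \ge n-2$.

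The main obstacle is the lower bound. Because the support budget equals $n$ exactly, there is essentially no spare room: every single move must reshape the whole configuration, so — unlike the static $\gp$-number, where one simply packs triangles — one cannot park most robots and explore with a few. The real work is to schedule the rolling moves into a global sweep that provably reaches all $\binom{n}{2}$ edges without ever creating a $P_3$, much as the jellyfish traversal had to be checked move by move. I expect the cleanest write-up to fix one endpoint of the exploring edge, sweep the other endpoint across all leaves by repeated transfer/rotate moves, then advance the fixed endpoint with an absorb/eject cycle, and to induct on the number of vertices still to be swept.
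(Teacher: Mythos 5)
Your proposal is correct in substance and arrives at both bounds, but the upper bound is organised differently from the paper's. The paper first eliminates triangle components outright: if $S$ contains the clique on $\{1,2\},\{2,3\},\{1,3\}$, it argues that no robot can ever occupy the vertex $\{1,4\}$, so such an $S$ is not mobile; with only star components left, the count $|S|=\sum(x_i-1)\le n-k$ gives $|S|\le n-1$, with equality only for the full star, which is frozen. You instead keep triangles in play via the sharper identity $|S|=n-F-D$ and enumerate the three configurations with $F+D\le 1$, showing each is immobile; this buys a uniform treatment at the cost of a three-way case check, and both routes are valid. One caution: your blanket ``$P_3$-obstruction'' claim is false as stated --- your own \emph{eject} move $\{1,\ell\}\move\{\ell,z\}$ from a star with a free vertex $z$ is a legal move that leaves the support, and a star can also legally swap a leaf onto a free vertex via $\{c,a\}\move\{c,x\}$ with no obstruction from the residual clique. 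What rescues your argument is that $F+D\le 1$ prevents a star component and a free vertex from coexisting, so in each of your three configurations an escaping move collides with some \emph{other} component's support (or, for a cherry, stays inside its own three-vertex support); that verification must be done per configuration rather than by appeal to the general lemma, and it is exactly the ``routine check'' you defer. Your lower-bound set (star at $1$ plus the disjoint edge $\{n-1,n\}$) turns into the paper's set (star at $1$ missing the leaf $n$) after your single \emph{absorb} move, and your absorb/eject/transfer repertoire matches the paper's traversal, so that half is essentially identical.
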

	\begin{proof}
		Let $S$ be a largest mobile set in $L(K_n)$. By~\cite[Theorem 3.1]{AnaChaChaKlaTho} each component of the subgraph induced by $S$ is a clique. Calls these cliques $W_1,\dots, W_k$, $k \geq 1$, with orders $n_1, \dots, n_k$ respectively. Each of these cliques correspond to either an induced star in $K_n$ or an induced $C_3$. We identify the vertex set of $K_n$ with $[n]$ and a vertex of $L(K_n)$, that is an edge $ij$ of $K_n$, with the pair $\{ i,j\} $. 
		
		Suppose that $S$ contains a clique $W$ that corresponds to a $C_3$ in $K_n$, so that without loss of generality $W$ is the clique on the edges $\{ 1,2\} $, $\{ 2,3\} $ and $\{ 1,3\} $. No robot can be positioned at an edge $\{ 1,i\} $, where $4 \leq i \leq n$. Consider the edge $\{ 1,4\} $. Observe that no robot on an edge of $W$ can move to $\{ 1,4\} $ without creating three-in-a-line. Similarly any robot on an edge $\{ i,j\} $, $4 \leq i,j \leq n$, would create three-in-a-line if it moves to $\{ 1,4\} $. Therefore no robot can ever visit the edge $\{ 1,4\} $ in this scenario, a contradiction, so we can assume that each clique in $S$ corresponds to an induced star in $K_n$. 
		
		Following the convention of~\cite{ghorbani-2021} for $1 \leq i \leq k$ we write \[ X_i = \bigcup _{\{ i,j\} \in V(W_i)}\{ i,j\} \] and set $x_i= |X_i|$. As each $W_i$ corresponds to a star of $K_n$ we have $x_i = n_i+1$ for $1 \leq i \leq k$. It follows that
		\[ |S| = \Sigma _{i=1}^{k}n_i = \Sigma _{i=1}^{k}(x_i-1) \leq n-k.\]
		Thus $|S| \leq n-1$ and we have equality if and only if $k = 1$ and $W_1$ corresponds to an induced star of order $n$ in $K_n$; however in this case no robot is free to move without violating the general position property. Therefore $|S| \leq n-2$.
		
		Conversely, there is a mobile set of $L(K_n)$ of order $n-2$, namely \[ \{ \{1,2\} ,\{ 1,3\} ,\dots ,\{ 1,n-1\} \} .\] The move $\{ 1,2\} \move \{ 1,n\} $ is valid, so all edges adjacent to $1$ can be visited. Also for $2 \leq i \leq n-1$ the move $\{ 1,i\} \move \{ n,i\} $ is valid. Therefore the only vertices that must still be visited are those of the form $\{ i,j\} $, where $2 \leq i \leq j$; without loss of generality we show how to visit $\{ 2,3\} $. This can be done by performing the move $\{ 1,i\} \move \{ i,n\} $ for $4\leq i \leq n-1$, followed by $\{ 1,2\} \move \{ 2,3\} $. This completes the proof.
	\end{proof}

	\section{Concluding remarks}
	
	As a conclusion we list a few interesting open problems that arise naturally. 
	
	\begin{itemize}
		\item Since it not even clear whether checking if a given set of vertices of a graph is a mobile general position set is in NP, the computational complexity of computing the mobile general position number seems to be a challenging and interesting problem.  
		\item Determine the mobile general position number for all unicyclic graphs. 
		\item Determine $\mob(K(n,k))$ for $k\ge 3$.
		\item Based on Theorem~\ref{thm:line-graphs} it would be interesting to investigate the mobile general position number of arbitrary line graphs.
		\item In view of Theorems~\ref{thm:kneser} and~\ref{thm:line-graphs}, 
		since $L(K_n)$ is the complement of $K(n,2)$, we propose to investigate $\mob(G) + \mob(\overline{G})$ for an arbitrary $G$, that is, the additive Nordhaus-Gaddum inequalities. 
		\item Is there a general relationship between the mobile general position number and clique number?
		\item Finally, in our model it suffices that each vertex is visited by one of the robots. However, possible applications can also be imagined in which each vertex must be visited by every robot while still maintaining the general position property at all times. We believe this variant of mobility deserves independent investigation. 
	\end{itemize}
	
	\section*{Acknowledgments}
	
	Sandi Klav\v{z}ar was partially supported by the Slovenian Research Agency (ARRS) under the grants P1-0297, J1-2452, and N1-0285. Ismael G. Yero has been partially supported by the Spanish Ministry of Science and Innovation through the grant PID2019-105824GB-I00. Moreover, this investigation was developed while James Tuite and Ismael Yero were visiting the University of Ljubljana and these authors thank the university for their hospitality; Ismael Yero received support for this visit from the ``Ministerio de Educaci\'on, Cultura y Deporte'', Spain, under the ``Jos\'e Castillejo'' program for young researchers (reference number: CAS21/00100). James Tuite also gratefully acknowledges funding support from EPSRC grant EP/W522338/1. Aditi Krishnakumar received funding to work on this project as part of a research internship at the Open University.
	
	Finally the authors would especially like to thank Sumaiyah Boshar and Benjamin Wilkins for their help on this project as part of a Nuffield research internship at the Open University.
	
	\section*{Conflict of interests}
	The authors declare that there is no conflict of interests regarding the publication of this paper.

\end{document}